\theoremstyle{plain}
\newtheorem{thm}{Theorem}[section]
\newtheorem{lemma}[thm]{Lemma}
\newtheorem{prop}[thm]{Proposition}
\theoremstyle{definition}
\numberwithin{equation}{section}
\title{Rudnick and Soundarajan's Theorem over Prime Polynomials for the Rational Function Field}
\author{J. MacMillan}
\date{}
\newcommand\blfootnote[1]{%
  \begingroup
  \renewcommand\thefootnote{}\footnote{#1}%
  \addtocounter{footnote}{-1}%
  \endgroup
}
\begin{document}
\maketitle
\blfootnote{2010 Mathematics Subject Classification: Primary 11M38; Secondary 11G20, 11M06,  13F30, 11R58, 14G10\\Date: Octobber 13, 2020\\Key Words: finite fields, function fields, quadratic Dirichlet L-functions, monic irreducible polynomials,prime polynomials, Riemann Hypothesis for curves}
\par\noindent
ABSTRACT: In this paper, we use the methods of Andrade, Rudnick and Soundarajan to prove a Theorem about Lower bounds of moments of quadratic Dirichlet L-functions associated to monic irreducible polynomials over function fields.
\section{Introduction}
A fundamental problem in Analytic Number Theory is to understand the asymptotic behaviour of moments of families of L-functions. For example, in the case of the Riemann-zeta function, a problem is to establish an asymptotic formula for
\begin{equation}\label{eq:1.1}
    M_k(T)=\int_1^T\left|\zeta\left(\frac{1}{2}+it\right)\right|^{2k}dt,
\end{equation}
where $k$ is a positive integer and $T\rightarrow\infty$. Asymptotic formulas for the first two moments have been explicitly calculated, the first by Hardy and Littlewood \cite{Hardy1916}, in which they proved that 
\begin{equation*}
    M_1(T)\sim T\log T
\end{equation*}
and Ingham \cite{Ingham1926} proved that
\begin{equation*}
    M_2(T)\sim \frac{1}{2\pi^2}T\log^4T.
\end{equation*}
Although no higher moments have been explicitly calculated , it is conjectured that 
\begin{equation*}
    M_k(T)\sim c_kT(\log T)^{k^2}
\end{equation*}
where, due to Conrey and Ghosh \cite{ConreyGhosh1992}, the constant $c_k$ assumes a more explicit form, namely
\begin{equation*}
    c_k=\frac{a_kg_k}{\Gamma(k^2+1)},
\end{equation*}
where
\begin{equation*}
    a_k=\prod_P\left[\left(1-\frac{1}{p^2}\right)^{k^2}\sum_{m\geq 0}\frac{d_k(m)^2}{p^m}\right],
\end{equation*}
$g_k$ is an integer when $k$ is an integer and $d_k(m)$ is the number of ways to represent $m$ as a product of $k$ factors. Ramachandra \cite{Ramachandra1980} obtained a lower bound for moments of the Riemann-zeta function for positive integers $k$. In particular he showed that  
\begin{equation*}
    M_k(T)\gg T(\log T)^{k^2}.
\end{equation*}
For the family of Dirichlet L-functions, $L(s,\chi_d)$ associated to the quadratic character $\chi_d$, a problem is to understand the asymptotic behaviour of
\begin{equation}\label{eq:1.2}
    S_k(X)=\sum_{|d|\leq X}L\left(\frac{1}{2},\chi_d\right)^k,
\end{equation}
where the sum is over fundamental discriminants $d$ as $X\rightarrow\infty$. Jutila \cite{Jutila1981}, proved that
\begin{equation*}
    S_1(X)\sim C_1X\log X
\end{equation*}
and 
\begin{equation*}
    S_2(X)\sim C_2X\log^3X,
\end{equation*}
where $C_1$ and $C_2$ are positive constants. Restricting $d$ to be odd, square-free and positive, so that $\chi_{8d}$ are real, primitive characters with conductor $8d$ and with $\chi_{8d}(-1)=1$, Soundarajan \cite{Soundararajan2000NonvanishingS=1/2} proved 
\begin{equation*}
    \sum_{|d|\leq X}L\left(\frac{1}{2},\chi_{8d}\right)^3\sim C_3X\log^6X,
\end{equation*}
for some positive constant $C_3$. It is conjectured, by Keating and Snaith \cite{Keating2000} that
\begin{equation*}
    S_k(X)\sim C_k X(\log X)^{\frac{k(k+1)}{2}},
\end{equation*}
for some positive constant $C_k$. Rudnick and Soundarajan \cite{Rudnick2006} proved a result for the lower bounds for moments of these Dirichlet L-functons, when $k$ is an even integer. In particular, they showed that
\begin{equation*}
    S_k(X)\gg X(\log X)^{\frac{k(k+1)}{2}}.
\end{equation*}
\par\noindent
In the Function Field setting, the analogue problem of (\ref{eq:1.2}) is to understand the asymptotic behaviour of 
\begin{equation}\label{eq:1.3}
    I_k(g)=\sum_{D\in\mathcal{H}_{2g+1}}L\left(\frac{1}{2},\chi_D\right)^k
\end{equation}
as $|D|=q^{\text{deg}(D)}\rightarrow\infty$, where $\mathcal{H}_{2g+1}$ denotes the space of monic, square-free polynomials of degree $2g+1$ over $\mathbb{F}_q[T]$. In the setting of fixing $q$, where in particular $q\equiv 1(\text{mod }4)$, and letting $g\rightarrow\infty$, Andrade and Keating \cite{Andrade2012} and Florea \cite{Florea2017} proved that
\begin{equation*}
    I_1(g)\sim \tilde{C_1}|D|\log_q|D|
\end{equation*}
for some positive constant $\tilde{C_1}$. Andrade and Keating \cite{Andrade2014} conjectured that
\begin{equation*}
    I_k(g)\sim \tilde{C_k}|D|(\log_q|D|)^{\frac{k(k+1)}{2}}
\end{equation*}
for some positive constant $\tilde{C_k}$. The second, third and fourth moments were explicitly calculated by Florea \cite{Florea2017a,Florea2017c}, where the computed constants agreed with the constants conjectured by Andrade and Keating. Similar to the calculations done by Rudnick and Soundarajan in the number field setting, Andrade \cite{Andrade2016a} proved a result for the lower bounds of (\ref{eq:1.3}) when $k$ is an even integer. In particular, he proved that
\begin{equation*}
    I_k(g)\gg |D|(\log_q|D|)^{\frac{k(k+1)}{2}}.
\end{equation*}
\par\noindent
Another problem in Function Fields is to understand the asymptotic behaviour of 
\begin{equation}\label{eq:1.4}
    \sum_{P\in\mathcal{P}_{2g+1}}L\left(\frac{1}{2},\chi_P\right)^k
\end{equation}
where $\mathcal{P}_{2g+1}$ denotes the spaces of monic, irreducible polynomials of degree $2g+1$ over $\mathbb{F}_q[T]$. Andrade and Keating \cite{Andrade2013} proved
\begin{equation*}
    \sum_{P\in\mathcal{P}_{2g+1}}(\log_q|P|)L\left(\frac{1}{2},\chi_P\right)\sim |P|\log_q|P|,
\end{equation*}
while, along with Bui and Florea \cite{Bui2020}, in the same paper computed the second moment
\begin{equation*}
    \sum_{P\in\mathcal{P}_{2g+1}}L\left(\frac{1}{2},\chi_P\right)^2\sim|P|(\log_q|P|)^2.
\end{equation*}
In this paper, we use similar methods to that of Andrade, Rudnick and Soundarajan to obtain a lower bound for (\ref{eq:1.4}). The main result is the following.
\begin{thm}
For every even natural number $k$ and $n=2g+1$ or $n=2g+2$ we have,
\begin{equation*}
    \frac{1}{|\mathcal{P}_{n}|}\sum_{P\in\mathcal{P}_{n}}L\left(\frac{1}{2},\chi_P\right)^k \gg_k (\log_q|P|)^{\frac{k(k+1)}{2}}.
\end{equation*}
\end{thm}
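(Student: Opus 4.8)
The plan is to adapt the Rudnick--Soundarajan lower-bound method, as carried out by Andrade in the squarefree case, the new feature being that the average runs over prime polynomials. Fix a small constant $\theta>0$ (to be constrained below), set $x=\lfloor\theta n\rfloor$, and introduce the Dirichlet polynomial
\[
A(P)=\sum_{\substack{f\text{ monic}\\ \deg f\le x}}\frac{\chi_P(f)}{|f|^{1/2}},
\]
a short truncation modelling $L(1/2,\chi_P)$. I would then set
\[
S_1=\sum_{P\in\mathcal{P}_n}L\!\left(\tfrac12,\chi_P\right)A(P)^{k-1},\qquad S_2=\sum_{P\in\mathcal{P}_n}A(P)^k.
\]
Because $k$ is even we have $|A(P)|^k=A(P)^k$, and since $L(1/2,\chi_P)\ge 0$ for this family, Hölder's inequality with exponents $k$ and $k/(k-1)$ gives
\[
S_1\le\sum_{P\in\mathcal{P}_n}L\!\left(\tfrac12,\chi_P\right)|A(P)|^{k-1}\le\Big(\sum_{P\in\mathcal{P}_n}L\!\left(\tfrac12,\chi_P\right)^k\Big)^{1/k}S_2^{(k-1)/k}.
\]
Hence $\sum_{P}L(1/2,\chi_P)^k\ge S_1^{k}/S_2^{k-1}$, and the problem reduces to a lower bound for $S_1$ and an upper bound for $S_2$, both of order $|\mathcal{P}_n|\,n^{k(k+1)/2}$; the factor $|\mathcal{P}_n|$ then cancels after forming $S_1^k/S_2^{k-1}$ and dividing by $|\mathcal{P}_n|$, leaving exactly $n^{k(k+1)/2}=(\log_q|P|)^{k(k+1)/2}$.

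Both $S_1$ and $S_2$ are evaluated by expanding into Dirichlet polynomials and interchanging summation, so that the arithmetic core is the average $\sum_{P\in\mathcal{P}_n}\chi_P(f)$. For $S_1$ I would first use that, by the Riemann Hypothesis for curves, $L(s,\chi_P)$ is a polynomial of degree $n-1$ in $q^{-s}$, so that $L(1/2,\chi_P)$ is a finite character sum; substituting this together with $A(P)^{k-1}=\sum_m c(m)\chi_P(m)/|m|^{1/2}$ reduces $S_1$ to a sum whose inner factor is $\sum_{P\in\mathcal{P}_n}\chi_P(hm)$. The evaluation of this average is the prime-polynomial input: via quadratic reciprocity in $\mathbb{F}_q[T]$ one writes $\chi_P(hm)$ in terms of $\big(\tfrac{P}{hm}\big)$ up to an explicit sign, and then $\sum_{P\in\mathcal{P}_n}\big(\tfrac{P}{hm}\big)$ equals $|\mathcal{P}_n|(1+o(1))$ when $hm$ is a perfect square and is $O(\deg(hm)\,q^{n/2})$ otherwise, by the prime polynomial theorem in arithmetic progressions (again a consequence of Weil's bound). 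The surviving diagonal runs over arguments that are perfect squares $\ell^2$, and here the decisive function-field feature enters: because the number of monic polynomials of degree $m$ is exactly $q^m$, the sum $\sum_{\deg\ell\le y}|\ell|^{-1}$ grows \emph{linearly} in $y$, so the diagonal sums grow \emph{polynomially} in $n$ rather than logarithmically. A pole-order computation at the central point shows the diagonal of $S_1$ has size $\asymp|\mathcal{P}_n|\,n^{\alpha}$ with $\alpha=\tfrac{k(k+1)}{2}$, and the same mechanism applied to $S_2$, whose main term comes from tuples $f_1\cdots f_k$ forming a perfect square, gives $\asymp|\mathcal{P}_n|\,n^{\beta}$ with $\beta=\tfrac{k(k+1)}{2}$. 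Since $k\alpha-(k-1)\beta=\tfrac{k(k+1)}{2}$, this is precisely the exponent required.

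I expect the main obstacle to be the \emph{uniform} control of the non-square (off-diagonal) terms. Each individual average $\sum_{P\in\mathcal{P}_n}\chi_P(f)$ with $f$ a non-square is small by Weil's bound, but the expansions of $A(P)^{k-1}$ and $A(P)^{k}$ contain exponentially many such terms, and, weighted by $|f|^{-1/2}$, their total contribution is of size $q^{(n+Y)/2}$ up to polynomial factors, where $Y\asymp k\theta n$ is the largest degree occurring. Requiring this to be dominated by the diagonal main term (of size $\asymp q^{n}$ up to polynomial factors) forces $(n+Y)/2<n$, that is $\theta<1/k$, which dictates the choice of resonator length; one must then check that with such a short resonator the diagonal still attains the full power $n^{k(k+1)/2}$, which it does, since the pole-order computation is insensitive to $\theta$ and affects only the implied constant. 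The remaining technical points are establishing the exact finite-sum expression for $L(1/2,\chi_P)$ and carrying the two parity cases $n=2g+1$ and $n=2g+2$ in parallel, the distinction affecting only the sign in the functional equation and the precise length of the polynomial $L(s,\chi_P)$.
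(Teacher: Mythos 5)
Your overall strategy is exactly the paper's: the same resonator $A(P)$ of length $x\asymp\theta n$ with $\theta\ll 1/k$, the same $S_1$ and $S_2$, the same H\"older reduction $\sum_{P}L(1/2,\chi_P)^k\geq S_1^k/S_2^{k-1}$, the same diagonal/off-diagonal split using the Weil-type bound $\sum_{P\in\mathcal{P}_n}\chi_P(f)\ll\frac{\deg f}{n}q^{n/2}$ for non-square $f$, and the same exponent bookkeeping (both $S_1$ and $S_2$ of size $|\mathcal{P}_n|\,n^{k(k+1)/2}$ up to constants, so that $k\alpha-(k-1)\beta=\frac{k(k+1)}{2}$).

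There is, however, one genuine gap, in the treatment of $S_1$. You propose to substitute for $L(1/2,\chi_P)$ the full polynomial expression $\sum_{m=0}^{n-1}\sum_{\deg f=m}\chi_P(f)q^{-m/2}$ of degree $n-1$ in $q^{-1/2}$. With that choice the off-diagonal terms of $S_1$ are not under control: bounding each non-square average by the Weil estimate and summing the weights trivially, one uses $\sum_{\deg f\leq Y}|f|^{-1/2}\asymp q^{Y/2}$, so the total off-diagonal contribution is of order $q^{(n-1)/2}\cdot q^{(k-1)x/2}\cdot q^{n/2}$ up to polynomial factors, i.e.\ roughly $q^{n+(k-1)x/2}$, which exceeds the main term $q^{n}n^{O(1)}$ no matter how small $\theta$ is. Your stated error bound $q^{(n+Y)/2}$ with $Y\asymp k\theta n$ silently assumes that every Dirichlet polynomial in the product has length $O(\theta n)$, which is false for the $L$-factor. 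The missing ingredient is the approximate functional equation (Lemma 2.3 of the paper): the functional equation at the central point folds the degree-$2g$ polynomial into two character sums over monic $f$ of degrees at most $g$ and $g-1$, whence $\sum_{\deg f\leq g}|f|^{-1/2}\asymp q^{n/4}$ and the off-diagonal of $S_1$ becomes $O(q^{3n/4+(k-1)x/2+o(n)})$, which is admissible for $\theta$ small. This folding step is indispensable and is not implied by the mere fact that $L(s,\chi_P)$ is a polynomial in $q^{-s}$; once it is inserted, the rest of your argument matches the paper's.
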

\section{Background and Preliminaries}
Before we prove Theorem 1.1, we state some facts which can generally be found in  \cite{Rosen2002}. Fix a finite field $\mathbb{F}_q$, where $q\equiv 1(\text{mod }4)$ and let $\mathbb{A}=\mathbb{F}_q[T]$ be the polynomial ring over $\mathbb{F}_q$ and let $k=\mathbb{F}_q(T)$ be the rational function field over $\mathbb{F}_q$. 
We use the notion $\mathbb{A}^+, \mathbb{A}^+_n$ and $\mathbb{A}^+_{\leq n}$ to denote the set of all monic polynomials in $\mathbb{A}$, the set of all monic polynomials of degree $n$ in $\mathbb{A}$ and the set of all monic polynomials of degree less than or equal to $n$ in $\mathbb{A}$ respectively. The zeta function associated to $\mathbb{A}$ is defined by the infinite series
\begin{equation*}
    \zeta_{\mathbb{A}}(s)=\sum_{f\in\mathbb{A}^+}\frac{1}{|f|^s},
\end{equation*}
where $|f|=q^{\text{deg}(f)}$ if $f\neq 0$ and $|f|=0$ if $f=0$. There are $q^n$ monic polynomials of degree $n$, therefore we have
\begin{equation*}
    \zeta_{\mathbb{A}}(s)=\frac{1}{1-q^{1-s}}.
\end{equation*}
Let $P$ be a monic irreducible polynomial in $\mathbb{A}$ of odd degree. We denote by $\chi_P$ the quadratic character defined in terms of the quadratic residue symbol for $\mathbb{F}_q[T]$:
\begin{equation*}
    \chi_P(f)=\left(\frac{P}{f}\right)
\end{equation*}
where $f\in\mathbb{A}$. For more details see \cite{Rosen2002}, chapter 3. Thus, if $Q\in\mathbb{A}$ is a monic, irreducible polynomial, we have
\[
\chi_P(Q)=
\begin{cases}
0 &\text{if } Q|P\\
1 &\text{if } Q\nmid P \text{ and } P \text{ is a square mod }P\\
-1 &\text{if } Q\nmid P \text{ and } P \text{ is not a square mod }P.
\end{cases}
\]
Thus, the corresponding Dirichlet L-function associated with the quadratic character $\chi_P$ is defined as
\begin{equation*}
    L(s,\chi_P)=\sum_{f\in\mathbb{A}^+}\frac{\chi_P(f)}{|f|^s}.
\end{equation*}
Let 
\begin{equation*}
    \mathcal{P}_{n}=\{P\in\mathbb{A},\text{ monic, irreducible, deg}(P)=n\}.
\end{equation*}
If $P\in\mathcal{P}_{2g+1}$, then by the argument given in \cite{Andrade2013}, $L(s,\chi_P)$ is a polynomial in $q^{-s}$ of degree $2g$ given by
\begin{equation*}
    L(s,\chi_P)=\sum_{n=0}^{2g}\sum_{f\in\mathbb{A}^+_n}\chi_P(f)q^{-ns}
\end{equation*}
and satisfies the functional equation
\begin{equation*}
    L(s,\chi_P)=(q^{1-2s})^gL(1-s,\chi_P)
\end{equation*}
and the Riemann Hypothesis for curves proved by Weil \cite{Weil1948} tells us that all the zeros of $L(s,\chi_P)$ have real part equal to $\frac{1}{2}$. The next results will be fundamental in proving Theorem 1.1.
\begin{thm}[Prime Polynomial Theorem]
We have that
\begin{equation*}
    |\mathcal{P}_n|=\frac{q^n}{n}+O\left(\frac{q^{\frac{n}{2}}}{n}\right).
\end{equation*}
\begin{proof}
See \cite{Rosen2002}, Theorem 2.2.
\end{proof}
\begin{prop}
If $f\in\mathbb{A}$, monic, deg$(f)>0$ and $f$ is not a perfect square, then 
\begin{equation*}
    \left|\sum_{P\in\mathcal{P}_n}\chi_P(f)\right|\ll \frac{\text{deg}(f)}{n}q^{\frac{n}{2}}.
\end{equation*}
\end{prop}
\begin{proof}
See \cite{Rudnick2008}, section 2.
\end{proof}
\end{thm}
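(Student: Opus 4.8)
The plan is to turn the sum over prime polynomials into a character sum attached to a single fixed Dirichlet character and then to control that sum through the Riemann Hypothesis for the associated $L$-function. Since $q\equiv 1\pmod 4$, the factor $(-1)^{\frac{q-1}{2}\deg(P)\deg(f)}$ occurring in quadratic reciprocity over $\mathbb{F}_q[T]$ is identically $1$, so for $P\nmid f$ one has $\chi_P(f)=\left(\frac{P}{f}\right)=\left(\frac{f}{P}\right)$. Writing $\psi_f(h)=\left(\frac{h}{f}\right)$ for the Jacobi symbol, $\psi_f$ is a Dirichlet character modulo $f$ and $\chi_P(f)=\psi_f(P)$ for every $P\in\mathcal{P}_n$, both sides vanishing when $P\mid f$. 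Because $f$ is monic of positive degree and is not a perfect square, some irreducible factor occurs to an odd power, so $\psi_f$ is non-principal. Hence it suffices to estimate $\sum_{P\in\mathcal{P}_n}\psi_f(P)$.

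Next I would pass to the $L$-function $L(u,\psi_f)=\sum_{h\in\mathbb{A}^+}\psi_f(h)u^{\deg h}=\prod_P\bigl(1-\psi_f(P)u^{\deg P}\bigr)^{-1}$. Since $\psi_f$ is non-principal, $L(u,\psi_f)$ is a polynomial in $u$; factoring it as $\prod_j(1-\alpha_j u)$ and taking the logarithmic derivative $u\frac{d}{du}\log L$ gives
\[
\sum_{\deg h=n}\Lambda(h)\psi_f(h)=-\sum_j\alpha_j^{\,n},
\]
where $\Lambda$ is the von Mangoldt function on $\mathbb{A}^+$. The inverse roots split into those coming from the primitive character inducing $\psi_f$, of which there are $\ll \deg(f)$ and each of which satisfies $|\alpha_j|=q^{1/2}$ by Weil's Riemann Hypothesis for curves, together with the finitely many roots supplied by the Euler factors at primes dividing $f$, which have $|\alpha_j|=1$. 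Summing the geometric contributions then yields $\bigl|\sum_{\deg h=n}\Lambda(h)\psi_f(h)\bigr|\ll \deg(f)\,q^{n/2}$.

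Finally I would recover the sum over primes from the von Mangoldt sum. Separating genuine primes from higher prime powers,
\[
\sum_{\deg h=n}\Lambda(h)\psi_f(h)=n\sum_{P\in\mathcal{P}_n}\psi_f(P)+\sum_{k\ge 2}\ \sum_{\deg P=n/k}(\deg P)\,\psi_f(P)^k,
\]
and the prime-power terms are bounded in absolute value by $\sum_{k\ge 2}q^{n/k}\ll q^{n/2}$, using that $m\,|\mathcal{P}_m|\le q^m$. Combining this with the bound of the previous paragraph and dividing by $n$ gives $\bigl|\sum_{P\in\mathcal{P}_n}\psi_f(P)\bigr|\ll \frac{\deg(f)}{n}q^{n/2}$, which is the asserted estimate. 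I expect the main obstacle to be the analytic bookkeeping: one must establish that $L(u,\psi_f)$ is a polynomial whose number of inverse roots of modulus $q^{1/2}$ is $\ll\deg(f)$, and in particular handle an imprimitive $\psi_f$ correctly by descending to the primitive character that induces it and accounting for the extra unit-modulus Euler-factor roots. The reciprocity reduction and the passage from prime powers to primes are routine by comparison.
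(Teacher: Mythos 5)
Your proposal is correct and takes essentially the same route as the paper, which proves nothing itself but defers to \cite{Rudnick2008}, Section 2: your reduction via reciprocity (with trivial sign because $q\equiv 1\pmod 4$) to a fixed non-principal character $\psi_f$ modulo $f$, the polynomiality of $L(u,\psi_f)$ with $\ll \deg(f)$ inverse roots bounded by $q^{1/2}$ via Weil's Riemann Hypothesis, and the explicit formula with an $O(q^{n/2})$ prime-power correction is exactly the cited argument. Two harmless remarks: the primitive part of $L(u,\psi_f)$ can also carry a trivial zero of modulus $1$, so not every one of its inverse roots has modulus exactly $q^{1/2}$ (your bound only needs $|\alpha_j|\le q^{1/2}$), and you leave the Prime Polynomial Theorem itself to the standard reference, which is precisely what the paper does by citing \cite{Rosen2002}.
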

\begin{lemma}[Approximate Functional Equation]
For $P\in\mathcal{P}_{2g+1}$, we have
\begin{equation}
    L\left(\frac{1}{2},\chi_P\right)=\sum_{f_1\in\mathbb{A}^+_{\leq g}}\frac{\chi_P(f_1)}{\sqrt{|f_1|}}+\sum_{f_2\in\mathbb{A}^+_{\leq g-1}}\frac{\chi_P(f_2)}{\sqrt{|f_2|}}.
\end{equation}
\end{lemma}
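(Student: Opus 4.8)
The plan is to start from the polynomial expression for $L(s,\chi_P)$ given above, evaluate it at $s=\tfrac{1}{2}$, and then use the functional equation to fold the high-degree terms back onto the low-degree terms. First I would set $s=\tfrac12$ in
\[
L(s,\chi_P)=\sum_{n=0}^{2g}\sum_{f\in\mathbb{A}^+_n}\chi_P(f)q^{-ns},
\]
and observe that since $|f|=q^n$ for $f\in\mathbb{A}^+_n$, this collapses to the single sum $\sum_{n=0}^{2g}\sum_{f\in\mathbb{A}^+_n}\chi_P(f)/\sqrt{|f|}$, i.e.\ a sum over all monic $f$ of degree at most $2g$. I would then split this sum at the middle degree $g$, writing it as the range $0\le\deg f\le g$ plus the range $g+1\le\deg f\le 2g$. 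The first range is precisely the first sum in the claimed identity, so the entire content of the lemma lies in transforming the second range.

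To handle the second range, I would extract the coefficient symmetry encoded in the functional equation. Writing $c_n=\sum_{f\in\mathbb{A}^+_n}\chi_P(f)$ and $u=q^{-s}$, the L-function is $\sum_{n=0}^{2g}c_nu^n$; substituting into $L(s,\chi_P)=(q^{1-2s})^gL(1-s,\chi_P)$ --- noting that $q^{1-2s}=qu^2$ and $q^{-(1-s)}=1/(qu)$ --- and comparing coefficients of $u^n$ yields the symmetry
\[
c_n=q^{\,n-g}c_{2g-n}.
\]
This is the only step demanding genuine care: one must track the powers of $q$ correctly through the substitution $u=q^{-s}$ and reindex via $m=2g-n$ without sign or off-by-one errors.

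With the coefficient relation in hand, the second range becomes
\[
\sum_{n=g+1}^{2g}c_nq^{-n/2}=\sum_{n=g+1}^{2g}q^{\,n-g}c_{2g-n}\,q^{-n/2}.
\]
Setting $m=2g-n$, so that $m$ runs from $0$ to $g-1$ as $n$ runs from $2g$ down to $g+1$, and simplifying the exponent through $q^{\,n/2-g}=q^{-m/2}$, I would obtain $\sum_{m=0}^{g-1}c_mq^{-m/2}$, which is exactly the second sum $\sum_{f_2\in\mathbb{A}^+_{\le g-1}}\chi_P(f_2)/\sqrt{|f_2|}$. Adding the two ranges then gives the stated identity. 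The main obstacle here is bookkeeping rather than conceptual difficulty: the argument is a clean application of the functional equation, and essentially the only thing that can go wrong is a miscalculation in the coefficient symmetry or in the subsequent reindexing.
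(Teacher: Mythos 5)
Your proposal is correct: the coefficient symmetry $c_n=q^{n-g}c_{2g-n}$ follows exactly as you compute from substituting $u=q^{-s}$ into the functional equation, and the reindexing $m=2g-n$ folds the range $g+1\le n\le 2g$ onto $\sum_{m=0}^{g-1}c_mq^{-m/2}$ without error. The paper itself gives no details, deferring to Lemma 3.3 of Andrade--Keating, but the argument there is precisely the one you carry out, so your proof matches the paper's intended approach.
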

\begin{proof}
The proof is similar to that given in \cite{Andrade2012}, Lemma 3.3. 
\end{proof}
\section{Proof of Theorem 1.1}
In this section, we prove Theorem 1.1. Let $k$ be a given even number and let $x=\frac{2(2g)}{15k}$. We define
\begin{equation}
    A(P)=\sum_{n\in\mathbb{A}^+_{\leq x}}\frac{\chi_P(n)}{\sqrt{|n|}}
\end{equation}
and let
\begin{equation}
    S_1=\sum_{P\in\mathcal{P}_{2g+1}}L\left(\frac{1}{2},\chi_P\right)A(P)^{k-1}
\end{equation}
and 
\begin{equation}
    S_2=\sum_{P\in\mathcal{P}_{2g+1}}A(P)^k.
\end{equation}
An application of Triangle Inequality followed by H\"older's inequality gives us that 
\begin{align*}
   & \left|\sum_{P\in\mathcal{P}_{2g+1}}L\left(\frac{1}{2},\chi_P\right)A(P)^{k-1}\right|\leq \sum_{P\in\mathcal{P}_{2g+1}}|L(\frac{1}{2},\chi_P)||A(P)|^{k-1}\\
   &\leq \left(\sum_{P\in\mathcal{P}_{2g+1}}L(\frac{1}{2},\chi_P)^k\right)^{\frac{1}{k}}\left(\sum_{P\in\mathcal{P}_{2g+1}}A(P)^k\right)^{\frac{k-1}{k}}.
\end{align*}
Rearranging gives
\begin{align*}
    \sum_{P\in\mathcal{P}_{2g+1}}L\left(\frac{1}{2},\chi_P\right)^k&\geq \frac{\left(\sum_{P\in\mathcal{P}_{2g+1}}L(\frac{1}{2},\chi_P)A(P)^{k-1}\right)^k}{\left(\sum_{P\in\mathcal{P}_{2g+1}}A(P)^k\right)^{k-1}}\\
    &=\frac{S_1^k}{S_2^{k-1}}.
\end{align*}
Thus, to prove Theorem 1.1, we only need to give satisfactory estimates for $S_1$ and $S_2$.
\subsection{Evaluating $S_2$}
We have that 
\begin{equation*}
    A(P)^k=\sum_{\substack{n_j\in\mathbb{A}^+_{\leq x}\\ j=1,\dotsc,k}}\frac{\chi_P(n_1\dotsc n_k)}{\sqrt{|n_1|\dotsc |n_k|}}.
\end{equation*}
So
\begin{align*}
    S_2&=\sum_{\substack{n_j\in\mathbb{A}^+_{\leq x}\\ j=1,\dotsc,k}}\frac{1}{\sqrt{|n_1|\dotsc|n_k|}}\sum_{P\in\mathcal{P}_{2g+1}}\chi_P(n_1\dotsc n_k)\\
    &=\sum_{\substack{n_j\in\mathbb{A}^+_{\leq x}\\ j=1,\dotsc,k\\n_1\dotsc n_k=\square}}\frac{1}{\sqrt{|n_1|\dotsc |n_k|}}\sum_{P\in\mathcal{P}_{2g+1}}1+\sum_{\substack{n_j\in\mathbb{A}^+_{\leq x}\\ j=1,\dotsc,k\\n_1\dotsc n_k=\neq\square}}\frac{1}{\sqrt{|n_1|\dotsc |n_k|}}\sum_{P\in\mathcal{P}_{2g+1}}\chi_P(n_1\dotsc n_k).
\end{align*}
Using Theorem 2.1 and Proposition 2.2, we have 
\begin{align*}
    S_2&=\frac{|P|}{\log_q|P|}\sum_{\substack{n_j\in\mathbb{A}^+_{\leq x}\\ j=1,\dotsc,k\\n_1\dotsc n_k=\square}}\frac{1}{\sqrt{|n_1|\dotsc |n_k|}}+\sum_{\substack{n_j\in\mathbb{A}^+_{\leq x}\\ j=1,\dotsc,k\\n_1\dotsc n_k=\square}}\frac{1}{\sqrt{|n_1|\dotsc |n_k|}}O\left(\frac{|P|^{\frac{1}{2}}}{\log_q|P|}\right)\\&+\sum_{\substack{n_j\in\mathbb{A}^+_{\leq x}\\ j=1,\dotsc,k\\n_1\dotsc n_k\neq\square}}\frac{1}{\sqrt{|n_1|\dotsc |n_k|}}O\left(\frac{|P|^{\frac{1}{2}}}{\log_q|P|}\text{deg}(n_1\dotsc n_k)\right).
\end{align*}
Using the choice of $x$ given and after some manipulation with the $O$-terms, we get that 
\begin{equation*}
    S_2=\frac{|P|}{\log_q|P|}\sum_{\substack{n_j\in\mathbb{A}^+_{\leq x}\\ j=1,\dotsc,k\\n_1\dotsc n_k=\square}}\frac{1}{\sqrt{|n_1|\dotsc |n_k|}}+O(|P|^{\frac{17}{30}}).
\end{equation*}
Writing $n_1\dotsc n_k=m^2$ we see that
\begin{align*}
    \sum_{m\in\mathbb{A}^+_{\leq x}}\frac{d_k(m^2)}{|m|}\leq \sum_{\substack{n_j\in\mathbb{A}^+_{\leq x}\\ j=1,\dotsc,k\\n_1\dotsc n_k=\square}}\frac{1}{\sqrt{|n_1|\dotsc |n_k|}}\leq \sum_{m\in\mathbb{A}^+_{\leq kx}}\frac{d_k(m^2)}{|m|}.
    \end{align*}
    Using similar methods to that given in \cite{Andrade2016a}, we see that
    \begin{equation}\label{eq3}
        \sum_{m\in\mathbb{A}^+_{\leq z}}\frac{d_k(m^2)}{|m|}\sim C(k)z^{\frac{k(k+1)}{2}}.
    \end{equation}
    Therefore we can conclude that 
    \begin{equation}\label{eqs1}
        S_2\ll |P|(\log_q|P|)^{\frac{k(k+1)}{2}-1}.
    \end{equation}
    \subsection{Evaluating $S_1$}
    Using Lemma 2.3, we have that
    \begin{align*}
        S_1&=\sum_{\substack{f_1\in\mathbb{A}^+_{\leq g}\\n_j\in\mathbb{A}^+_{\leq x}\\j=1,\dotsc k-1}}\frac{1}{\sqrt{|f_1||n_1|\dotsc|n_{k-1}|}}\sum_{P\in\mathcal{P}_{2g+1}}\chi_P(f_1n_1\dotsc n_{k-1})\\&+\sum_{\substack{f_2\in\mathbb{A}^+_{\leq g-1}\\n_j\in\mathbb{A}^+_{\leq x}\\j=1,\dotsc k-1}}\frac{1}{\sqrt{|f_2||n_1|\dotsc|n_{k-1}|}}\sum_{P\in\mathcal{P}_{2g+1}}\chi_P(f_2n_1\dotsc n_{k-1}).
    \end{align*}
    The two sums for $S_1$ are the same apart from the size of the sums. So we will only estimate the first sum, as the second follows from replacing $g$ with $g-1$. Thus we have that
    \begin{align*}
        &\sum_{\substack{f\in\mathbb{A}^+_{\leq g}\\n_j\in\mathbb{A}^+_{\leq x}\\j=1,\dotsc, k-1}}\frac{1}{\sqrt{|f||n_1|\dotsc| n_{k-1}|}}\sum_{P\in\mathcal{P}_{2g+1}}\chi_P(fn_1\dotsc n_{k-1})\\
       &= \sum_{\substack{f\in\mathbb{A}^+_{\leq g}\\n_j\in\mathbb{A}^+_{\leq x}\\j=1,\dotsc, k-1\\fn_1\dotsc n_{k-1}=\square}}\frac{1}{\sqrt{|f||n_1|\dotsc |n_{k-1}|}}\sum_{P\in\mathcal{P}_{2g+1}}1+\sum_{\substack{f\in\mathbb{A}^+_{\leq g}\\n_j\in\mathbb{A}^+_{\leq x}\\j=1,\dotsc,k-1\\fn_1\dotsc n_{k-1}\neq \square}}\frac{1}{\sqrt{|f||n_1|\dotsc |n_{k-1}|}}\sum_{P\in\mathcal{P}_{2g+1}}\chi_P(fn_1\dotsc n_{k-1}).
    \end{align*}
    Using Theorem 2.1 and Proposition 2.2, we have that
    \begin{align*}
        &\sum_{\substack{f\in\mathbb{A}^+_{\leq g}\\n_j\in\mathbb{A}^+_{\leq x}\\j=1,\dotsc, k-1}}\frac{1}{\sqrt{|f||n_1|\dotsc |n_{k-1}|}}\sum_{P\in\mathcal{P}_{2g+1}}\chi_P(fn_1\dotsc n_{k-1})\\
        &=\frac{|P|}{\log_q|P|}\sum_{\substack{f\in\mathbb{A}^+_{\leq g}\\n_j\in\mathbb{A}^+_{\leq x}\\j=1,\dotsc,k-1\\fn_1\dotsc n_{k-1}=\square}}\frac{1}{\sqrt{|f||n_1|\dotsc| n_{k-1}|}}+\sum_{\substack{f\in\mathbb{A}^+_{\leq g}\\n_j\in\mathbb{A}^+_{\leq x}\\j=1,\dotsc,k-1\\fn_1\dotsc n_{k-1}=\square}}\frac{1}{\sqrt{|f||n_1|\dotsc |n_{k-1}|}}O\left(\frac{|P|^{\frac{1}{2}}}{\log_q|P|}\right)\\
        &+\sum_{\substack{f\in\mathbb{A}^+_{\leq g}\\n_j\in\mathbb{A}^+_{\leq x}\\j=1,\dotsc,k-1\\fn_1\dotsc n_{k-1}\neq\square}}\frac{1}{\sqrt{|f||n_1|\dotsc| n_{k-1}|}}O\left(\frac{|P|^{\frac{1}{2}}}{\log_q|P|}\text{deg}(fn_1\dotsc n_{k-1})\right)
    \end{align*}
    Using the choice of $x$ given and after some manipulation with the O-terms, we get that
    \begin{align}\label{eq:3.4}
        &\sum_{\substack{f\in\mathbb{A}^+_{\leq g}\\n_j\in\mathbb{A}^+_{\leq x}\\j=1,\dotsc, k-1}}\frac{1}{\sqrt{|f||n_1|\dotsc| n_{k-1}|}}\sum_{P\in\mathcal{P}_{2g+1}}\chi_P(fn_1\dotsc n_{k-1})\nonumber\\
        &=\frac{|P|}{\log_q|P|}\sum_{\substack{f\in\mathbb{A}^+_{\leq g}\\n_j\in\mathbb{A}^+_{\leq x}\\j=1,\dotsc,k-1\\fn_1\dotsc n_{k-1}=\square}}\frac{1}{\sqrt{|f||n_1|\dotsc |n_{k-1}|}}+O(|P|^{\frac{49}{60}})
    \end{align}
    For the main term we let $n_1\dotsc n_{k-1}=rh^2$ and $f=rl^2$ and thus we get
    \begin{equation*}
        \sum_{\substack{f\in\mathbb{A}^+_{\leq g}\\n_j\in\mathbb{A}^+_{\leq x}\\j=1,\dotsc,k-1\\fn_1\dotsc n_{k-1}=\square}}\frac{1}{\sqrt{|f||n_1|\dotsc |n_{k-1}|}}=\sum_{\substack{n_j\in\mathbb{A}^+_{\leq x}\\j=1,\dotsc,k-1\\n_1\dotsc n_{k-1}=rh^2}}\frac{1}{|rh|}\sum_{l\in\mathbb{A}^+_{\leq \frac{g-\text{deg}(r)}{2}}}\frac{1}{|l|}.
    \end{equation*}
    We see that
    \begin{equation*}
        \sum_{l\in\mathbb{A}^+_{\leq \frac{g-\text{deg}(r)}{2}}}\frac{1}{|l|}\sim C(r,h)(\log_q|P|)
    \end{equation*}
    for some positive constant $C(r,h)$. Therefore it follows that the main term in (\ref{eq:3.4}) is
    \begin{align*}
        &\gg \frac{|P|}{\log_q|P|}\sum_{\substack{n_j\in\mathbb{A}^+_{\leq x}\\j=1,\dotsc,n_{k-1}\\n_1\dotsc n_{k-1}=rh^2}}\frac{1}{|rh|}\gg \frac{|P|}{\log_q|P|}\sum_{\substack{r,h \text{ monic}\\\text{deg}(rh^2)\leq x}}\frac{d_{k-1}(rh^2)}{|rh|}\gg |P|(\log_q|P|)^{\frac{k(k+1)}{2}-1}. 
    \end{align*}
    where the last bound follows from (\ref{eq3}) but replacing $k$ with $k-1$. Therefore we have that
    \begin{equation}\label{eqs2}
        S_1\gg |P|(\log_q|P|)^{\frac{k(k+1)}{2}-1}.
    \end{equation}
    Combining (\ref{eqs1}) and (\ref{eqs2}) and using Theorem 2.1 proves Theorem 1.1.\\
    \par\noindent
    \textbf{Acknowledgement:} The author is grateful to the Leverhulme Trust (RPG-2017-320) for the support given during this research through a PhD studentship. The author would also like to thank Dr. Julio Andrade for suggesting this problem to me, as well as his useful advice during the course of the research.
\bibliographystyle{plain}
\bibliography{lowerboundprime}
Department of Mathematics, University of Exeter, Exeter, EX4 4QF, UK\\
\textit{E-mail Address:}jm1015@exeter.ac.uk
\end{document}